\documentclass[a4paper,10pt,reqno, english]{amsart}
\usepackage[utf8]{inputenc}
\usepackage[T1]{fontenc}
\usepackage{amsmath,amsthm}
\usepackage{amsfonts,amssymb,enumerate}
\usepackage{url,paralist}
\usepackage{mathtools,color}
\usepackage[colorlinks=true,urlcolor=blue,linkcolor=red,citecolor=magenta]{hyperref}
\usepackage{enumerate}
\usepackage{anysize}  
\usepackage{tikz-cd}
\usepackage{bm}

\theoremstyle{plain}
\newtheorem{theorem}{Theorem}[section]
\newtheorem{lemma}[theorem]{Lemma}

\theoremstyle{definition}
\newtheorem{definition}[theorem]{Definition}

\newcommand{\R}{\mathbb{R}}

\newcommand{\A}{\mathcal{A}}

\newcommand{\gr}{\operatorname{G}}
\newcommand{\GL}{\operatorname{GL}}
\newcommand{\Mat}{\operatorname{Mat}}
\renewcommand{\span}{\operatorname{span}}
\newcommand{\GP}[1]{\operatorname{P}_Z(#1)}
\newcommand{\nng}[2]{\operatorname{G}_{#1}^{\ge0}(\R^{#2})}
\newcommand{\GPZ}[2]{\operatorname{P}_{#2}(#1)}
\newcommand{\diag}{\operatorname{diag}}
\newcommand{\id}{\operatorname{id}}

\newcommand\RP{\mathbb{R}{\mathrm{P}}}

%-------------------------------------------------------------------%
\begin{document}
% -------------------------------------------------------------------%

\title{Some more amplituhedra are contractible}
% -------------------------------------------------------------------%

% -------------------------------------------------------------------%

\author[Blagojevi\'c]{Pavle V. M. Blagojevi\'{c}}
\address{Institute of Mathematics, Freie Universität Berlin, Arnimallee 2, 14195 Berlin, Germany\hfill\break%
\mbox{\hspace{4mm}}Mat.\ Institut SANU, Knez Mihailova 36, 11000 Beograd, Serbia}
\email{blagojevic@math.fu-berlin.de}

\author[Galashin]{Pavel Galashin}
\address{Department of Mathematics, Massachusetts Institute of Technology, 77 Massachusetts Avenue,\hfill\break%
\mbox{\hspace{4mm}}Cambridge, MA 02139, USA}
\email{galashin@mit.edu}

\author[Pali\'c]{Nevena Pali\'c}
\address{Institute of Mathematics, Freie Universität Berlin, Arnimallee 2, 14195 Berlin, Germany}
\email{palic@math.fu-berlin.de}

\author[Ziegler]{G\"unter M. Ziegler} 
\address{Institute of Mathematics, Freie Universität Berlin, Arnimallee 2, 14195 Berlin, Germany}
\email{ziegler@math.fu-berlin.de}

\thanks{The research by Pavle V. M. Blagojevi\'{c} has received funding from the grant ON 174024 of the Serbian Ministry of Education and Science.\newline\indent
The research by Nevena Pali\'c has received funding from DFG via the Berlin Mathematical School.\newline\indent
This material is based on work supported by the National Science Foundation under Grant No.\ DMS-1440140 during the Fall of 2017, while all authors were in residence at the Mathematical Sciences Research Institute in Berkeley CA}
\date{\today}

\begin{abstract} 
The amplituhedra arise as images of the totally nonnegative Grassmannians 
by projections that are induced by linear maps. 
They were introduced in Physics by Arkani-Hamed \& Trnka (Journal of High Energy Physics, 2014) as model spaces that should provide a better understanding of the scattering amplitudes of quantum field theories. The topology of the amplituhedra has been known only in a few special cases, where they turned out to be homeomorphic to balls. The amplituhedra are special cases of Grassmann polytopes introduced by Lam (Current Developments in Mathematics 2014, Int.\ Press).
In this paper we show that that some further amplituhedra are homeomorphic to balls, and that some more Grassmann polytopes and amplituhedra are contractible. 
\end{abstract}

\maketitle

% -------------------------------------------------------------------%

%%%%%%%%%%%%%%%%%%%%%%%%%%%%%%%%%%%%%%%%%%%%%%%%%%%%%%%%%%%%%%%%%%%%%%%%%%%%%%%%%%%%%
\section{Introduction and statement of the main result}
\label{sec : Introduction and the statement of the main results }
%%%%%%%%%%%%%%%%%%%%%%%%%%%%%%%%%%%%%%%%%%%%%%%%%%%%%%%%%%%%%%%%%%%%%%%%%%%%%%%%%%%%%

\subsection{Introduction}
Let $n$ and $k$ be integers such that $n \geq k \geq 1$. If $\Mat_{k,n}$ denotes the space of all real $k\times n$ matrices of rank $k$,
then the real Grassmannian $\gr_k(\R^n)$ --- the space of all $k$-dimensional linear subspaces of $\R^n$ ---
can be defined as the orbit space $\gr_k(\R^n) = \GL_k \backslash \Mat_{k,n}$. 
The totally nonnegative part of the Grassmannian is defined quite analogously.

\begin{definition}[Postnikov {\cite[Sec.\,3]{Postnikov2006}}]
Let $n \geq k \geq 1$ be integers, let $\Mat_{k,n}^{\ge0}$ be the space of all real $k \times n$ matrices of rank $k$ all whose  maximal minors are nonnegative, and let $\GL_k^+$ denote the group of all real $k\times k$ matrices with positive determinant, which acts freely on $\Mat_{k,n}^{\ge0}$ by matrix multiplication from the left.
The \emph{totally nonnegative Grassmannian} $\gr_k^{\ge0}(\R^n)$ is the orbit space $\gr_k^{\ge0}(\R^n) = \GL_k^+ \backslash \Mat_{k,n}^{\ge0}$.
\end{definition}

The totally nonnegative Grassmannian was introduced and studied by Postnikov in 2006 \cite[Sec.\,3]{Postnikov2006},
building on works by Lusztig \cite{Lusztig1994} and by Fomin \& Zelevinsky \cite{Fomin1999}.
Subsequently, the geometric and combinatorial properties of the totally nonnegative Grassmannian were studied intensively. 
Rietsch \& Williams showed that the totally nonnegative Grassmannian is contractible \cite[Thm.\,1.1]{Rietsch2010}; an earlier argument by Lusztig \cite[Sec.\,4.4]{Lusztig1998} can also be adapted to prove the same. Galashin, Karp \& Lam \cite[Thm.\,1.1]{Galashin2017} proved that $\gr_k^{\ge0}(\R^n)$ is indeed homeomorphic to a closed $k(n-k)$-dimensional ball. 

In 2014, the physicists Arkani-Hamed \& Trnka \cite[Sec.\,9]{Arkani-Hamed2014} introduced the amplituhedra as certain images of the totally nonnegative Grassmannians. 
They conjectured that their geometry describes scattering amplitudes in some quantum field theories.
For a gentle introduction to amplituhedra in physics and mathematics consult \cite{Bourjaily2018}.
Shortly after, Lam introduced Grassmann polytopes \cite{Lam2016}, which generalize amplituhedra.

Postnikov \cite[Def.\,3.2, Thm.\,3.5]{Postnikov2006} defined a CW structure on the totally nonnegative Grassmannian $\gr_k^{\ge0}(\R^n)$ such that each cell, also called a \emph{positroid cell}, is indexed by the associated matroid -- a positroid -- of rank $k$ on $n$ elements, see also \cite{Postnikov2009}. 
Furthermore, Rietsch \& Williams \cite{Rietsch2010} showed that the closures of positroid cells are contractible and that their boundaries are homotopy equivalent to spheres.

\begin{definition}
\label{def:amplituhedron}
Let $k\geq 1$, $m\ge0$ and $n \geq k+m$ be integers, and let $Z$ be a real $(k+m)\times n$ matrix such that the assignment
\begin{equation}
\label{eqn:definition map Z}
\widetilde{Z}(\span(V))=\span(VZ^\top)
\end{equation}
induces a map
\[
\widetilde{Z} \colon \gr_k^{\ge0}(\R^n) \longrightarrow \gr_k(\R^{k+m}).
\]
Here $V\in \Mat_{k,n}^{\geq 0}$, $\span$ denotes the row span of a matrix, and $Z^\top$ is the transpose of the matrix $Z$.
The image $\widetilde{Z}(\bar{e})$ of a closed positroid cell $\bar{e}$ in the CW decomposition of the nonnegative Grassmannian $\gr_k^{\ge0}(\R^n)$ is called a \emph{Grassmann polytope}, denoted by $\GP{e}$. 
If $e$ is the maximal cell, which for this CW decomposition means  $\bar{e} = \gr_k^{\ge0}(\R^n)$, and all $(k+m)\times (k+m)$ minors of the matrix $Z$ are positive, then the Grassmann polytope $\GP{e}$ is called an \emph{amplituhedron} and is denoted by $\A_{n,k,m}(Z)$.
\end{definition}

The previous definition in particular means that if $v_1, \dots, v_k \in \R^n$ are linearly independent row vectors, then 
\[
\widetilde{Z}(\span\{v_1, \dots, v_k\}) = \span\{v_1Z^\top, \dots, v_kZ^\top\}.
\]
The map $\widetilde{Z}$ is said to be \emph{well defined} if $\span(VZ^\top)$ is a $k$-dimensional subspace of $\R^{k+m}$ for every $V \in \Mat_{k,n}^{\ge0}$.
The fact that the map $\widetilde{Z}$ is well defined when $Z$ is a matrix with positive maximal minors was established by Arkani-Hamed \& Trnka in \cite{Arkani-Hamed2014} and by Karp in \cite[Thm.\,4.2]{Karp2017b}. Lam \cite[Prop.\,15.2]{Lam2016}, however, considers a larger class of matrices $Z$ for which the map $\widetilde{Z}$ is still well defined.

The structure of the amplituhedron is known only in a few cases. 
In the case $m=0$ all amplituhedra $\A_{n,k,0}(Z)$ are the point $\gr_k(\R^{k})$, whereas when $m=1$ Karp \& Williams \cite[Cor.\,6.18]{Karp2017} have shown that the amplituhedron is homeomorphic to a ball. 
For $k=1$ the amplituhedron is a cyclic polytope of dimension $m$ on $n$ vertices \cite{Sturmfels1988}, and for $n=k+m$ the map $Z$ is a linear isomorphism, and consequently the amplituhedron is homeomorphic to the totally nonnegative Grassmannian $\gr_k^{\ge0}(\R^n)$, which is a ball by \cite[Thm.\,1.1]{Galashin2017}. Finally, Galashin, Karp \& Lam \cite[Thm.\,1.2]{Galashin2017} proved that the cyclically symmetric amplituhedra, amplituhedra arising from particularly chosen matrices $Z$, are homeomorphic to balls whenever $m$ is even. 
The topology of other Grassmann polytopes is unknown.

\medskip

\subsection{Main results}

Our first result gives a family of contractible Grassmann polytopes.

\begin{theorem}
\label{thm:contractible}
Let $k \geq 1$ and $m \ge0$ be integers, and let $Z$ be a real $(k+m)\times (k+m+1)$ matrix such that the map $\widetilde{Z}\colon \nng{k}{k+m+1} \longrightarrow \gr_k(\R^{k+m})$ is well defined. 
Then the Grassmann polytope $\GP{e}$ is contractible for every positroid cell $e$ in the CW decomposition of $\gr_k^{\ge0}(\R^{k+m+1})$.
\end{theorem}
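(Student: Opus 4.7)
My plan is to recognize $\widetilde{Z}$ as a projection of Grassmannians along a $1$-dimensional kernel, to show that its restriction to the closed positroid cell $\bar e$ has convex fibers, and then to transfer the contractibility of $\bar e$ to $\GP{e}$ via a continuous section. Set $n=k+m+1$. Assuming first that $\ker Z$ is $1$-dimensional (the case of interest, to which the general situation reduces by working inside $\operatorname{Im} Z$), let $u\in\R^n$ span $\ker Z$. Under the isomorphism $\R^n/\R u\cong\operatorname{Im} Z\subseteq\R^{k+m}$ induced by $Z$, the map $\widetilde Z$ is identified with the projection
\[
\pi_u\colon\{W\in\gr_k(\R^n):u\notin W\}\to\gr_k(\R^n/\R u),\qquad W\mapsto(W+\R u)/\R u.
\]
The well-definedness hypothesis is precisely the statement that $u$ lies in no $W\in\gr_k^{\ge0}(\R^n)$, so $\pi_u$ is defined on all of $\gr_k^{\ge0}(\R^n)$ and $\GP{e}=\pi_u(\bar e)$.

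Next I describe the fibers of $\pi_u|_{\bar e}$ explicitly. Fix a basis of $\R^n$ in which $u=e_n$. For each $L\in\pi_u(\bar e)$, fix a $k\times(n-1)$ matrix representative $A$ of $L$; any $W\in\pi_u^{-1}(L)$ then has a unique representative of the form $[A\mid c]$ with $c\in\R^k$. The Plücker coordinates of $[A\mid c]$ split into minors of $A$ (independent of $c$) and linear forms in $c$ (the minors involving the last column). Consequently, the positroid (in)equalities defining $\bar e$ translate, for each fixed $L$, into a finite system of linear equalities and inequalities in $c$ and cut out a nonempty convex polyhedron $F_L=\pi_u^{-1}(L)\cap\bar e\subset\R^k$.

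The multivalued map $L\mapsto F_L$ thus takes nonempty closed convex values in the affine bundle $\pi_u$ and is lower semi-continuous by continuity of the Plücker coordinates and closedness of $\bar e$. A continuous selection theorem of Michael type therefore yields a continuous section $s\colon\pi_u(\bar e)\to\bar e$ of $\pi_u$. The composition $s\circ\pi_u\colon\bar e\to\bar e$ is a retraction of $\bar e$ onto $s(\pi_u(\bar e))$, which is homeomorphic to $\pi_u(\bar e)$ via $\pi_u$. Since $\bar e$ is contractible by Rietsch--Williams and any retract of a contractible space is contractible, $\GP{e}=\pi_u(\bar e)$ is contractible.

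The principal obstacle is the continuous selection step: one must verify the global lower semi-continuity of $L\mapsto F_L$ and package the fibers (which live in different affine spaces as $L$ varies) into a form to which Michael's theorem applies, either by a global trivialization of the affine bundle $\pi_u$ or by a partition-of-unity patching of local selections. Once the section is in hand, the retraction argument is entirely routine.
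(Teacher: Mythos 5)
Your reduction to a projection along the one-dimensional kernel and your description of the fibers are sound and essentially coincide with the paper's Lemma~\ref{lemma:fiber}: fixing a representative of $L$ and parametrizing the fiber by the last column $c$, the Pl\"ucker coordinates become affine-linear in $c$, so each fiber of $\widetilde{Z}|_{\bar e}$ is a nonempty closed convex set. Up to the change of coordinates, this is the same computation the paper performs at the matrix level (the minors of $U+\lambda x^\top a$ have degree at most one in $\lambda$).

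The gap is in the second half, the continuous selection. Michael's theorem requires the set-valued map $L\mapsto F_L$ to be \emph{lower} semi-continuous, and the justification you offer --- continuity of the Pl\"ucker coordinates and closedness of $\bar e$ --- yields only that the graph of $L\mapsto F_L$ is closed, i.e.\ \emph{upper} semi-continuity. These are genuinely different: a family of polyhedra cut out by continuously varying linear inequalities can jump up in dimension at special parameters (already $\{c: tc\le 0,\ -tc\le 0\}$ does so at $t=0$), in which case points of the large fiber need not be approximable from nearby fibers and no continuous selection through them need exist. Since $\bar e$ is not convex for $k\ge 2$, the standard argument that linear projections of convex bodies have continuously varying fibers is unavailable, and the positroid stratification makes dimension jumps of the fibers entirely plausible. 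You flag this as the ``principal obstacle'' but do not close it, and note that what you are attempting to produce --- a section exhibiting $\GP{e}$ as a retract of $\bar e$ --- is strictly stronger than what the theorem needs. The paper circumvents the issue entirely: contractibility (not continuity) of the fibers feeds into Smale's Vietoris mapping theorem (Theorem~\ref{thm:Smale}) to show $\widetilde{Z}|_{\bar e}$ is a weak homotopy equivalence, and then Whitehead's theorem applies because $\GP{e}$ is triangulable (Theorem~\ref{thm:triangulation}, via semi-algebraicity) --- a step your argument would also need a substitute for if it ever invoked homotopy-theoretic machinery on $\GP{e}$. To rescue your route you would have to actually prove lower semi-continuity of $L\mapsto F_L$ over all of $\GP{e}$, including over images of boundary cells; absent that, the selection step does not go through.
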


The proof of Theorem \ref{thm:contractible} relies on classical results of Smale \cite[Main Thm.]{Smale1957} and Whitehead \cite[Thm.\,1]{Whitehead1949} combined with the fact that every Grassmann polytope admits a triangulation (as a topological space), see Theorem \ref{thm:triangulation}.

\medskip

The following is a consequence of Smale's result \cite[Main Thm.]{Smale1957}.

\begin{theorem}[Smale] 
\label{thm:Smale}
Let $X$ and $Y$ be path connected, locally compact, separable metric spaces, and in addition let $X$ be locally contractible.
	Let $f\colon X\longrightarrow Y$ be a continuous surjective proper map, that is, any inverse image of a compact set is compact.
	If for every $y\in Y$ the inverse image $f^{-1}(\{y\})$ is contractible, then the induced homomorphism
	\[
	f_{\#} \colon  \pi_i(X)\longrightarrow \pi_i(Y)
	\]
	is an isomorphism for all $i \geq 0$.
\end{theorem}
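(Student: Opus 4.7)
The strategy is to derive Theorem~\ref{thm:Smale} as a direct consequence of the Main Theorem of \cite{Smale1957}, applied once for each nonnegative integer $n$.

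First I would recall Smale's original statement in its quantitative form: under exactly the topological assumptions on $X$, $Y$ and $f$ stated above, but assuming only that $X$ is $LC^{n}$ and that each fiber $f^{-1}(y)$ is $LC^{n}$ and $(n-1)$-connected, one concludes that $Y$ is $LC^{n}$ and that the induced homomorphism $f_{\#}\colon \pi_r(X,x_0)\to \pi_r(Y,f(x_0))$ is surjective for $r\le n$ and injective for $r<n$. The next step is to check that the stronger hypotheses of Theorem~\ref{thm:Smale} imply these weaker ones for every $n\ge 0$. Local contractibility of $X$ is, essentially by definition, the statement that $X$ is $LC^{n}$ for every $n$. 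Contractibility of each fiber $f^{-1}(y)$ immediately yields the required $(n-1)$-connectedness; the local condition $LC^{n}$ on the fibers either follows from contractibility in the ambient class of locally compact separable metric spaces, or, more concretely, is transparent from the CW-like structure of the fibers that arise in the applications of this theorem elsewhere in the paper.

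With this verification in place, I would invoke Smale's theorem once for each $n$ and collate the resulting statements: for any fixed $r\ge 1$, choosing $n>r$ yields that $f_{\#}\colon \pi_r(X)\to \pi_r(Y)$ is both surjective and injective, hence an isomorphism. The case $r=0$ is automatic because $X$ and $Y$ are path connected by hypothesis. The one point that requires genuine care -- and the anticipated main obstacle -- is precisely the matching of local connectedness hypotheses: the global condition ``contractible'' used in Theorem~\ref{thm:Smale} versus the local condition $LC^{n}$ used by Smale. In the metric, locally compact setting at hand this matching creates no real difficulty, and no new ideas beyond unwinding Smale's definitions are needed.
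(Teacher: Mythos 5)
Your route is the paper's route: the paper offers no proof of Theorem~\ref{thm:Smale} at all, it simply asserts that the statement is a consequence of the Main Theorem of \cite{Smale1957}, which is exactly the derivation you sketch (apply Smale's quantitative statement for each $n$ and collate). Your reading of Smale's conclusion and your collation over $n$, including the trivial $\pi_0$ case, are fine, and you are right that local contractibility of $X$ gives $LC^n$ for all $n$.

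The gap is in the one place you yourself flag and then dismiss. Smale's Main Theorem requires each fiber $f^{-1}(y)$ to be $LC^{n-1}$ in addition to $(n-1)$-connected, and your first proposed resolution --- that contractibility implies the local condition ``in the ambient class of locally compact separable metric spaces'' --- is false. The cone on the Hawaiian earring is a compact, separable, metric, contractible space that is not $LC^1$ at the cone over the wild point (small neighborhoods there deformation retract onto a small neighborhood of the wild point crossed with an interval, which still carries essential loops of arbitrarily small diameter). So contractibility of the fibers does not, by itself, deliver the hypotheses of Smale's theorem, and the derivation as you state it does not close. Your second fallback --- that the $LC^n$ property is ``transparent from the CW-like structure of the fibers that arise in the applications'' --- is not a proof of the theorem as stated, which quantifies over arbitrary contractible fibers. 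To be fair, this defect is inherited from the paper's own formulation: Theorem~\ref{thm:Smale} as printed silently drops Smale's local-connectivity hypothesis on the fibers. The honest fix is to add ``and locally contractible'' (or $LC^n$ for all $n$) to the fiber hypothesis; this costs nothing downstream, since in the application (Theorem~\ref{thm:fibers}) the fibers are homeomorphic to convex sets by Lemmas~\ref{lemma:fiber} and~\ref{lemma:homeomorphism}, hence locally contractible. As written, though, your verification step contains a genuinely false implication, so the proof is incomplete.
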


Recall that a continuous map $f\colon X\longrightarrow Y$ between topological spaces $X$ and $Y$ is a {\em weak homotopy equivalence} if the induced map on the path connected components $f_{\#}\colon\pi_0(X)\longrightarrow\pi_0(Y)$ is bijective, and for every point $x_0\in X$ and for every integer $n \geq 1$ the induced map $f_{\#}\colon\pi_n(X,x_0)\longrightarrow\pi_n(Y,f(x_0))$ is an isomorphism.

\begin{theorem}[{\cite[Thm.\,1]{Whitehead1949}}]
\label{thm:Whitehead}
Let $X$ and $Y$ be topological spaces that are homotopy equivalent to CW complexes. Then a continuous map $f\colon X \longrightarrow Y$ is a weak homotopy equivalence if and only if it is a homotopy equivalence.
\end{theorem}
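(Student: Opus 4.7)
The ``only if'' direction is immediate, since the homotopy groups $\pi_n$ are homotopy functors, and hence any homotopy equivalence induces isomorphisms on all $\pi_n$ and on $\pi_0$.

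For the substantive ``if'' direction, my plan is to reduce to the case where $X$ and $Y$ are honest CW complexes. Both being a weak equivalence and being a homotopy equivalence are invariant under pre- and post-composition with homotopy equivalences, so if $g\colon X' \to X$ and $h\colon Y \to Y'$ are homotopy equivalences with $X'$ and $Y'$ CW, then the question for $f$ is equivalent to the same question for $h \circ f \circ g \colon X' \to Y'$. Next, I would replace this map by the inclusion into its mapping cylinder $M_f := (X' \times [0,1]) \sqcup Y' / ((x,1) \sim f(x))$. There is a canonical strong deformation retraction $r\colon M_f \to Y'$, and $f$ factors as $X' \hookrightarrow M_f \xrightarrow{r} Y'$. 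Because $r$ is always a homotopy equivalence, $f$ is a weak equivalence if and only if the inclusion $i\colon X' \hookrightarrow M_f$ is; and $f$ is a homotopy equivalence if and only if $i$ is. Moreover $(M_f, X')$ is a CW pair. Hence it suffices to prove: for any CW pair $(Z, A)$ such that $A \hookrightarrow Z$ is a weak equivalence, $A$ is a strong deformation retract of $Z$.

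I would prove this key assertion by induction on the skeletal filtration $A \subseteq A \cup Z^{(0)} \subseteq A \cup Z^{(1)} \subseteq \cdots$, constructing a deformation retraction cell by cell. The hypothesis that $A \hookrightarrow Z$ is a weak equivalence, combined with the long exact sequence of the pair, gives $\pi_n(Z, A) = 0$ for all $n \geq 1$. For each $n$-cell $e^n$ of $Z \setminus A$ with characteristic map $\Phi\colon (D^n, S^{n-1}) \to (Z, A \cup Z^{(n-1)})$, the vanishing of $\pi_n(Z, A)$, coupled with the compression lemma, lets me homotope $\Phi$ rel $S^{n-1}$ into $A$. Applying the homotopy extension property for the CW pair $(A \cup Z^{(n)}, A \cup Z^{(n-1)})$, these per-cell compressions assemble into a homotopy that retracts $A \cup Z^{(n)}$ onto $A \cup Z^{(n-1)}$, extending the deformation already built. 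Composing (and reparametrising the $n$-th stage to live on the interval $[1 - 2^{-n}, 1 - 2^{-n-1}]$) produces a strong deformation retraction of all of $Z$ onto $A$ in the limit.

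The main obstacle is this cellular induction: one must keep the extending homotopies genuinely relative to $A$ at every stage, so that the construction glues into a single continuous homotopy $Z \times [0,1] \to Z$ fixing $A$ pointwise; the reparametrisation over dimensions, the HEP applied uniformly, and the finite-dimensional ``support'' of any compact subset of $Z$ are what guarantee continuity at time $t = 1$. Once this deformation retraction of $Z$ onto $A$ is in hand, the inclusion $A \hookrightarrow Z$ is a homotopy equivalence, which in the reduction above means $i\colon X' \hookrightarrow M_f$ is a homotopy equivalence; composing with $r$ and the equivalences $g, h$ shows that the original $f$ is a homotopy equivalence, completing the proof.
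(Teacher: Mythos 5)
The paper itself does not prove this statement: it is quoted as a classical result with a citation to Whitehead's 1949 paper, so there is no in-paper argument to compare with. Your proposal is the standard modern proof (mapping cylinder plus compression lemma), and its overall architecture is sound, but two steps need repair. The first is your assertion that $(M_f, X')$ is a CW pair. This is false in general: the mapping cylinder of a map between CW complexes carries a natural CW structure only when the map is cellular. You must first invoke the cellular approximation theorem to replace $h\circ f\circ g$ by a homotopic cellular map (harmless, since both ``weak homotopy equivalence'' and ``homotopy equivalence'' depend only on the homotopy class of the map) and only then form the mapping cylinder; without this, the skeletal induction has no CW pair to run on.

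The second is the inductive step, which as written is wrong. You claim the per-cell compressions ``assemble into a homotopy that retracts $A\cup Z^{(n)}$ onto $A\cup Z^{(n-1)}$.'' That statement fails even under your hypotheses: take $Z=D^2$ with one cell in each of the dimensions $0,1,2$ and $A$ the $0$-cell; then $A\hookrightarrow Z$ is a weak equivalence, but $D^2$ admits no retraction onto $A\cup Z^{(1)}=S^1$ (a retract of a contractible space is contractible). The correct induction does something different: one homotopes a single map $Z\to Z$, starting from the identity and always rel $A$ (at stage $n$ even rel $A\cup Z^{(n-1)}$), so that after stage $n$ it carries $A\cup Z^{(n)}$ \emph{into} $A$, not onto the lower skeleton. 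The inductive hypothesis puts the attaching sphere of each $n$-cell into $A$, so the cell together with the current map represents an element of $\pi_n(Z,A)=0$ and can be compressed into $A$; the homotopy extension property for the pair $(Z, A\cup Z^{(n)})$ then extends these compressions over all of $Z$. (For the $0$-cells one uses surjectivity of $\pi_0(A)\to\pi_0(Z)$, which is part of the weak-equivalence hypothesis but not of your long-exact-sequence argument.) With these corrections your reparametrised concatenation and the continuity check at $t=1$ go through, and the rest of the reduction is fine.
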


Since Theorem \ref{thm:Whitehead} requires that spaces have the homotopy type of a CW complex, the following theorem is a necessary ingredient in the proof of Theorem \ref{thm:contractible}.

\begin{theorem}
\label{thm:triangulation}
Every Grassmann polytope is semi-algebraic as a subset of a Grassmannian.
In particular, every Grassmann polytope is homeomorphic to a semi-algebraic subset of some real affine space, and admits a triangulation.
\end{theorem}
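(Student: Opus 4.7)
The plan is to provide a semi-algebraic realization of every ingredient in Definition~\ref{def:amplituhedron} and then invoke two classical theorems: Tarski--Seidenberg, to conclude that the image $\widetilde{Z}(\bar e)$ is semi-algebraic, and the \L{}ojasiewicz triangulation theorem for semi-algebraic sets, to produce the triangulation.

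First I would fix a semi-algebraic embedding of each real Grassmannian into affine space, using the identification of $\gr_j(\R^N)$ with the set of orthogonal projection matrices of rank $j$: a compact real algebraic subvariety of $\Mat_{N,N}(\R)\cong\R^{N^2}$ cut out by $P=P^\top$, $P^2=P$ and $\trace P=j$. With respect to this real algebraic structure the totally nonnegative Grassmannian $\gr_k^{\ge0}(\R^n)$ is semi-algebraic, since it is defined by requiring all Pl\"ucker coordinates of a given $k$-plane to have a common sign, which is a semi-algebraic condition on a $k$-plane. Each closed positroid cell $\bar e$ is a further semi-algebraic subset, obtained by additionally setting a prescribed collection of Pl\"ucker coordinates equal to zero; this is precisely Postnikov's matroid stratification of $\gr_k^{\ge0}(\R^n)$.

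Next I would show that $\widetilde{Z}$ is a semi-algebraic map between these affine models. At the level of matrix representatives the formula $V\mapsto VZ^\top$ is linear, and passing to Pl\"ucker coordinates the Cauchy--Binet formula expresses each maximal minor of $VZ^\top$ as a polynomial in the maximal minors of $V$, with coefficients given by maximal minors of $Z$. Thus $\widetilde{Z}$ is semi-algebraic, and Tarski--Seidenberg guarantees that $\widetilde{Z}(\bar e)$ is semi-algebraic in $\gr_k(\R^{k+m})$, which is the first assertion. Composing with the projection-matrix embedding of $\gr_k(\R^{k+m})$ exhibits $\widetilde{Z}(\bar e)$ as a compact semi-algebraic subset of $\R^{(k+m)^2}$, and the \L{}ojasiewicz triangulation theorem (see for instance Bochnak, Coste \& Roy, \emph{Real Algebraic Geometry}) then produces the desired triangulation.

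The only step requiring genuine care is the realization of $\widetilde{Z}$ as a semi-algebraic map between the affine-space models of the two Grassmannians, since it is naturally defined between orbit spaces and is guaranteed to be well defined only on the totally nonnegative part; once the Pl\"ucker coordinate and Cauchy--Binet description is in place, however, this becomes essentially automatic, and the remainder of the argument reduces to direct appeals to the two quoted theorems.
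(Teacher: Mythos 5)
Your proposal is correct and follows essentially the same route as the paper: embed everything semi-algebraically into affine space (the paper uses the Pl\"ucker embedding followed by the Veronese/projection-matrix embedding of $\RP^{d-1}$, $d=\binom{k+m}{k}$), observe that images of semi-algebraic sets under polynomial or regular rational maps are semi-algebraic, and invoke the triangulation theorem for compact semi-algebraic sets. The one step you flag as requiring ``genuine care''---realizing $\widetilde{Z}$ as a semi-algebraic map between quotient spaces---is sidestepped in the paper by computing $\nu(\gamma(\GP{e}))$ directly as the image of the semi-algebraic matrix space $\Mat_{k,n}^{\ge0}(e)$ under the composition $\nu\circ\gamma\circ\widehat{Z}$ of explicit polynomial and regular rational maps, never descending to the quotient until the end.
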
 

Note that Theorem \ref{thm:triangulation} claims that every Grassmann polytope $\GP{e}$ can be triangulated in a classical sense, thus there exists a simplicial complex $T$ and a homeomorphism $T \longrightarrow \GP{e}$. 
A very similar argument to ours was also given by Arkani-Hamed, Bai \& Lam in \cite[Appendix~J]{Arkani-Hamed2017}. 

\medskip
In order to apply Theorem \ref{thm:Smale} to the map $\widetilde{Z}$,
we need to understand its fibers. Thus we prove the following theorem.

\begin{theorem}
\label{thm:fibers}
Let $k \geq 1$ and $m \ge0$ be integers, and let $Z$ be a real $(k+m)\times (k+m+1)$ matrix such that the map $\widetilde{Z}$ is well defined. 
Then for every positroid cell $e$ and for every point $y \in \GP{e}$, the inverse image $(\widetilde{Z}|_{\bar{e}})^{-1}(\{y\}) = \widetilde{Z}^{-1}(\{y\}) \cap \bar{e}$ under the restriction map $\widetilde{Z} |_{\bar{e}} \colon \bar{e} \longrightarrow \GP{e}$ is contractible.
\end{theorem}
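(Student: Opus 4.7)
The plan is to realize the fiber as a non-empty convex polyhedron in an affine space; contractibility then follows for free. The geometric input that makes this clean is that $n=k+m+1$ is exactly one more than the number of rows of $Z$, so in the generic case $\dim \ker Z = 1$.

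First I would describe the ambient fiber $\widetilde{Z}^{-1}(\{y\})\subset \gr_k(\R^{k+m+1})$ without any positivity assumption. Assume $Z$ has full rank $k+m$, and let $z\in\R^{k+m+1}$ span $\ker Z$. Writing $y=Y\in \gr_k(\R^{k+m})$, set $\tilde Y := Z^{-1}(Y)$, a $(k+1)$-dimensional subspace of $\R^{k+m+1}$ containing $\R z$. A $k$-dimensional subspace $W$ lies in $\widetilde{Z}^{-1}(Y)$ precisely when $W\subset \tilde Y$ and $z\notin W$; equivalently, $W$ is a point in an open affine chart of $\gr_k(\tilde Y)\cong\RP^{k}$. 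Concretely, picking $v_1,\dots,v_k\in\tilde Y$ that complete $z$ to a basis of $\tilde Y$, every such $W$ is the row span of a unique matrix $V(\phi)$ whose $i$-th row is $v_i+\phi_i z$, for a parameter $\phi=(\phi_1,\dots,\phi_k)\in\R^k$. This gives a homeomorphism $\widetilde{Z}^{-1}(Y)\cong\R^k$.

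The crucial computation is that under this parametrization every Pl\"ucker coordinate of $V(\phi)$ is affine-linear in $\phi$. Expanding row-by-row by multilinearity of the determinant,
\[
\Delta_J(V(\phi)) \;=\; \Delta_J(V(0)) + \sum_{i=1}^{k}\phi_i\,\Delta_J^{(i)},
\]
where $\Delta_J^{(i)}$ is the minor obtained from $V(0)$ by replacing its $i$-th row with the $J$-entries of $z$; all higher-order terms vanish because the corresponding minors contain at least two identical rows $z|_J$. Combined with the standard description of positroid cell closures --- namely that $\bar e$ is the subset of $\gr_k^{\ge0}(\R^n)$ defined by $\Delta_J(V)=0$ for every $J$ that is not a basis of the positroid of $e$ --- this shows that the intersection $\widetilde{Z}^{-1}(Y)\cap \bar e$ is cut out from $\R^k$ by finitely many affine-linear equalities ($\Delta_J=0$) and inequalities ($\Delta_J\ge 0$). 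It is therefore a convex polyhedron, which is non-empty since $y\in \GP{e}=\widetilde Z(\bar e)$, and hence contractible.

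The main technical point I anticipate is the degenerate case where $Z$ has rank strictly less than $k+m$. Then $\dim\ker Z>1$, the ambient fiber becomes an affine chart of a larger Grassmannian, and the analogous expansion of $\Delta_J(V(\phi))$ carries nonzero monomials of degree up to $\min(k,\dim\ker Z)$, so the fiber need not be a convex polyhedron. I would handle this either by showing that well-definedness of $\widetilde{Z}$ forces the relevant rank condition, or by perturbing $Z$ and using a limit argument based on the semi-algebraic triangulability of Grassmann polytopes provided by Theorem \ref{thm:triangulation}.
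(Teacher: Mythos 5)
Your key computation is the same one the paper uses: since $n=k+m+1$, perturbing a representative matrix by $\phi^\top z$ with $z$ spanning $\ker Z$ changes every maximal minor affine--linearly in $\phi$, so the fiber should be a convex polyhedron. The paper phrases this as convexity of the fibers of the matrix-level map $\widehat{Z}$ (Lemma \ref{lemma:fiber}) and then needs a separate Lemma \ref{lemma:homeomorphism} to transport this to the Grassmannian; you work directly in an affine chart of the fiber inside $\gr_k(\R^n)$. The gap in your write-up is precisely the content of that second lemma.

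Concretely: a point of $\nng{k}{n}$ is a subspace admitting \emph{some} representative with all maximal minors nonnegative, and $\GL_k^+$ (not $\GL_k$) is what acts on such representatives. For your chosen representative $V(\phi)$ this means $\span V(\phi)\in\bar e$ if and only if either all $\Delta_J(V(\phi))\ge 0$ or all $\Delta_J(V(\phi))\le 0$ (together with the vanishing conditions from $I_e$). So in your chart the fiber is not the single polyhedron $P_+=\{\phi:\Delta_J(V(\phi))\ge0\ \forall J\}\cap L$ but the union $(P_+\cup P_-)\cap L$, where $P_-$ is the all-nonpositive locus. $P_+$ and $P_-$ are disjoint closed convex sets (they cannot meet because $V(\phi)$ has rank $k$ for every $\phi$), so if both pieces were nonempty the fiber would not even be connected. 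Ruling this out is not a formality and must use the hypothesis on $Z$: already for $k=1$, $n=2$, taking $z=(1,1)$ and $v_1=(1,-1)$ gives $V(\phi)=(1+\phi,\phi-1)$, for which $P_+=[1,\infty)$ and $P_-=(-\infty,-1]$ are both nonempty --- here $z$ is a nonnegative vector, so $\widetilde Z$ fails to be well defined, and that failure is the only thing preventing a counterexample. The paper handles this point via the Cauchy--Binet claim $\det(C)>0$ in the proof of Lemma \ref{lemma:homeomorphism}; you need to supply the analogous sign argument (e.g.\ using Lam's well-definedness criterion, or for $k=1$ a strictly positive vector orthogonal to $z$) before you may discard $P_-$.

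On your anticipated issue with $\operatorname{rank}Z<k+m$: the paper makes the same tacit assumption ("the kernel of the map is $1$-dimensional"), so this is not a point where you fall short of the published argument, but note that your first proposed fix is not available --- well-definedness does not force full rank. For instance, a $3\times 4$ matrix $Z$ of rank $2$ whose kernel is spanned by $(1,-1,0,0)$ and $(0,0,1,-1)$ yields a well-defined $\widetilde Z$ on $\gr_1^{\ge0}(\R^4)$, since that $2$-plane meets the nonnegative orthant only at the origin. The perturbation-and-limit alternative is also delicate, as contractibility is not preserved under the limits of semi-algebraic sets without further argument.
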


The proof of Theorem \ref{thm:fibers} is postponed to the next section.
Here we show that Theorem \ref{thm:fibers} in combination with Theorems \ref{thm:Smale}--\ref{thm:triangulation} implies our main result.

\begin{proof}[Proof of Theorem \ref{thm:contractible}]
Let $e$ be a positroid cell in the CW decomposition of $\nng{k}{k+m+1}$. We apply Theorem \ref{thm:Smale} to the map $\widetilde{Z}: \bar{e} \longrightarrow \GP{e}$.
The spaces $\bar{e}$ and $\GP{e}$, as well as the map $\widetilde{Z}$, satisfy the assumptions of Theorem \ref{thm:Smale}.
Furthermore, Theorem \ref{thm:fibers} implies that for every $y \in \bar{e}$, the fiber $\widetilde{Z}^{-1}(\{y\})$ is contractible.
Thus, from Theorem \ref{thm:Smale} we have that the map $\widetilde{Z}$ is a weak homotopy equivalence. 

The closed positroid cell $\bar{e}$ is a CW complex.
Furthermore, the Grassmann polytope $\GP{e}$ is a CW complex, by Theorem \ref{thm:triangulation}.
Thus, from Theorem \ref{thm:Whitehead}, we conclude that the map $\widetilde{Z}$ is a homotopy equivalence. 
Hence, the Grassmann polytope $\GP{e}$ is homotopy equivalent to the closed positroid cell $\bar{e}$, which is contractible, see \cite[Thm.\,1.1]{Rietsch2010}. 
\end{proof}

Theorem \ref{thm:contractible} in particular implies that all amplituhedra $\A_{k+m+1,k,m}(Z)$ are contractible.
Our next result shows that if in addition $m$ is even, they are homeomorphic to balls. 

\begin{theorem}
\label{thm:homeomorphic}
Let $k \geq 1$ be an integer, let $m \ge0$ be an even integer, and let $Z \in \Mat_{k+m,k+m+1}$ be a matrix with all $(k+m)\times (k+m)$ minors positive. 
Then the amplituhedron $\A_{k+m+1,k,m}(Z)$ induced by the matrix $Z$ is homeomorphic to a $km$-dimensional ball.
\end{theorem}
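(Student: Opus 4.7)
The plan is to upgrade the contractibility of $\A := \A_{k+m+1,k,m}(Z)$ guaranteed by Theorem~\ref{thm:contractible} to a homeomorphism with a closed $km$-ball. My approach has three ingredients: identify the interior and dimension of $\A$, establish a topological manifold-with-boundary structure on $\A$ (this is where the evenness of $m$ enters), and then deduce the ball property by standard topological machinery. Since Theorem~\ref{thm:contractible} applies with $e$ equal to the top cell of $\nng{k}{k+m+1}$, we already have that $\A = \GP{e}$ is contractible, and Theorem~\ref{thm:triangulation} adds that $\A$ is a compact triangulable semi-algebraic subset of $\gr_k(\R^{k+m})$.

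First I would verify that $\A$ is genuinely $km$-dimensional and that its interior carries a natural topological manifold structure. Because all maximal minors of $Z$ are positive, a direct Jacobian computation on $\widetilde Z$, restricted to the open top positroid cell of $\nng{k}{k+m+1}$, shows that $\widetilde Z$ is a smooth submersion onto an open subset of $\gr_k(\R^{k+m})$; this yields both the dimension count and the manifold structure on the interior of $\A$.

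The technical heart of the proof is the boundary analysis: for every point in the topological boundary of $\A$, I would exhibit a neighbourhood in $\A$ homeomorphic to the half-space $\R^{km-1}\times [0,\infty)$. This requires tracking the images under $\widetilde Z$ of codimension-one positroid cells in $\nng{k}{k+m+1}$ and, via a parity and orientation computation, checking that for $m$ even these images fit together on a single side of a locally flat $(km-1)$-dimensional hypersurface in $\gr_k(\R^{k+m})$, rather than folding onto opposite sides. This mirrors the parity mechanism used by Galashin, Karp \& Lam~\cite{Galashin2017} for cyclically symmetric amplituhedra, and it is the only step at which the hypothesis $m$ even is essential.

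Once $\A$ is known to be a compact contractible topological $km$-manifold with non-empty boundary, Lefschetz duality forces $\partial\A$ to have the integral homology of $S^{km-1}$; combined with the semi-algebraic structure and the contractibility of $\A$, the topological Poincar\'e conjecture together with the $h$-cobordism theorem (with classical results handling low dimensions) then identifies $\A$ with a closed $km$-ball. I expect the main obstacle to be the codimension-one parity computation, i.e.\ ensuring that no folding of $\widetilde Z$ occurs at the boundary when $m$ is even; everything else in the plan follows either from Theorem~\ref{thm:contractible} or from standard topological machinery.
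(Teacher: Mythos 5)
Your proposal has two genuine gaps, and it also misses a much shorter route. First, the ``technical heart'' --- showing that near every boundary point $\A$ looks like $\R^{km-1}\times[0,\infty)$, i.e.\ that $\widetilde{Z}$ does not fold along the images of the codimension-one positroid cells --- is exactly the hard part, and you do not carry it out; you only assert that a parity and orientation computation should work. This is not a routine verification: the topological boundary of $\A$ is a union of images of many lower positroid cells, these images may overlap, and establishing a manifold-with-boundary structure on amplituhedra is not known in this generality. Note also that Galashin, Karp \& Lam do not prove their ball theorem by a boundary-folding analysis; they construct a contractive flow from the cyclic symmetry, so the ``parity mechanism'' you invoke is not available off the shelf. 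Second, even granting that $\A$ is a compact contractible topological $km$-manifold with boundary, your concluding step fails as stated: Lefschetz duality only shows that $\partial\A$ is a homology $(km-1)$-sphere, and a compact contractible manifold bounded by a homology sphere need not be a ball (Mazur- and Newman-type examples are contractible manifolds whose boundaries are non-simply-connected homology spheres). To invoke the Poincar\'e conjecture and the $h$-cobordism theorem you must additionally prove $\pi_1(\partial\A)=1$, which you never address.

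The paper's proof is entirely different and avoids all of this. Since $n=k+m+1$, the kernel of $Z$ is one-dimensional, and by Karp's sign-variation result its generator has entries that alternate in sign, just like the kernel generator of the cyclically symmetric matrix $Z_0$ of Galashin--Karp--Lam. One can therefore find a positive diagonal matrix $D$ and a matrix $C\in\GL_{k+m}^+$ with $Z_0=CZD$; right multiplication by $D$ is a homeomorphism of $\gr_k^{\ge0}(\R^n)$ and $\widetilde{C}$ is a homeomorphism of $\gr_k(\R^{k+m})$, so $\A_{n,k,m}(Z)$ is homeomorphic (indeed projectively equivalent) to the cyclically symmetric amplituhedron $\A_{n,k,m}(Z_0)$, which is a closed $km$-ball for $m$ even by \cite[Thm.\,1.2]{Galashin2017}. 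The evenness of $m$ enters only through that cited theorem, not through any boundary parity computation.
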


The proof of Theorem \ref{thm:homeomorphic} is presented in Section \ref{sec:homeomorphic}.  We remark that the combinatorics of the amplituhedron in the case $n=k+m+1$ with $m$ even is identical to that of a cyclic polytope, see \cite{Galashin2018}. 

%%%%%%%%%%%%%%%%%%%%%%%%%%%%%%%%%%%%%%%%%%%%%%%%%%%%%%%%%%%%%%%%%%%%%%%%%%%%%%%
\subsection*{Acknowledgement} 
The authors thank Rainer Sinn for sharing the knowledge about semi-algebraic sets, to Thomas Lam, whose great observations increased the generality of the results in this paper, and to Steven Karp for helpful comments.
We are grateful to the referee for careful reading of our manuscript and for useful suggestions that improved the quality of our paper.

%%%%%%%%%%%%%%%%%%%%%%%%%%%%%%%%%%%%%%%%%%%%%%%%%%%%%%%%%%%%%%%%%%%%%%%%%%%%%%%%

%%%%%%%%%%%%%%%%%%%%%%%%%%%%%%%%%%%%%%%%%%%%%%%%%%%%%%%%%%%%%%%%%%%%%%%%%%%%%%%%
\section{Proof of Theorem \ref{thm:fibers}}
\label{sec:contractible}
%%%%%%%%%%%%%%%%%%%%%%%%%%%%%%%%%%%%%%%%%%%%%%%%%%%%%%%%%%%%%%%%%%%%%%%%%%%%%%%%

Let $k \geq 1, m \ge0$ and $n \geq k+m$ be integers and let $Z$ be a real $(k+m)\times n$ matrix such that the map $\widetilde{Z}$ is well defined. 
Since the action of the group $\GL_k^+$ on $\Mat_{k,n}^{\geq 0}$ is free, there is a fibration
\begin{equation}
\label{eqn:fibration nonnegative Grassmannian}
\GL_k^+ \longrightarrow \Mat_{k,n}^{\ge0} \longrightarrow \gr_k^{\ge0}(\R^n).
\end{equation}

The matrix $Z$, as in Definition \ref{def:amplituhedron}, induces a map 
\begin{eqnarray*}
\widehat{Z} : \Mat_{k,n}^{\ge0} & \longrightarrow & \Mat_{k,k+m},\\
V & \longmapsto & VZ^\top,
\end{eqnarray*}
which is again well defined, see for example \cite[Prop.\,15.2]{Lam2016}. 

Let $e$ be a positroid cell in the CW decomposition of $\nng{k}{n}$, and let $I_e \subseteq \binom{[n]}{k}$ be the family of nonbases (dependent sets) of cardinality $k$ of the matroid that defines the cell $e$. 
The maximal minors of a $k \times n$ matrix are indexed by the set $\binom{[n]}{k}$. Denote by $\Mat_{k,n}^{\ge0}(e)$ the set of all matrices $V \in \Mat_{k,n}^{\ge0}$ whose minors indexed by elements of $I_e$ are equal to zero. 
Then every point in $\bar{e} \subseteq \nng{k}{n}$ is represented by a matrix in $\Mat_{k,n}^{\ge0}(e)$, and the row span of every such matrix lies in $\bar{e}$.
In other words, $\bar{e}= \GL_k^{+} \backslash \Mat_{k,n}^{\geq 0}(e)$.
Thus the restriction of the fibration \eqref{eqn:fibration nonnegative Grassmannian} is a fibration 
\begin{equation}
\label{eqn:fibration positroid cell}
\GL_k^+ \longrightarrow \Mat_{k,n}^{\ge0}(e) \longrightarrow \bar{e}.
\end{equation}
Note that if $e$ is the maximal positroid cell, the set $\Mat_{k,n}^{\ge0}(e)$ is the whole set $\Mat_{k,n}^{\ge0}$.

Denote by $\widehat{\GP{e}}$ the image of the set $\Mat_{k,n}^{\ge0}(e)$ under the map $\widehat{Z}$. With a usual abuse of notation, we consider maps $\widehat{Z}: \Mat_{k,n}^{\ge0}(e) \longrightarrow \widehat{\GP{e}}$ and $\widetilde{Z}:\bar{e} \longrightarrow \GP{e}$. Then there exists a commutative diagram of spaces and continuous maps
\[
\begin{tikzcd}  
 \Mat_{k,n}^{\ge0}(e)   \arrow{r}{\widehat{Z}} \arrow{d}  & \widehat{\GP{e}} \arrow{d}      \\
\bar{e}	  \arrow{r}{\widetilde{Z}}  & \GP{e},
\end{tikzcd}
\]
where vertical maps send any matrix to its row span.

\medskip

The proof of Theorem \ref{thm:fibers} splits into the following two lemmas.

\begin{lemma}
\label{lemma:fiber}
Let $k \geq 1$ and $m\ge0$ be integers, $n=k+m+1$, and let $Z$ be a real $(k+m)\times n$ matrix such that the map $\widetilde{Z}$ is well defined.
Then for every positroid cell $e$ in the CW decomposition of $\nng{k}{n}$ and for every $W \in \widehat{\GP{e}}$, the inverse image $\widehat{Z}^{-1}(\{W\}) \subseteq \Mat_{k,n}^{\ge0}(e)$ is nonempty and convex.
\end{lemma}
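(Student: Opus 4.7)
The plan is to exploit the hypothesis $n = k+m+1$, which forces the fiber $\widehat{Z}^{-1}(\{W\})$ to be a low-dimensional affine subspace of $\Mat_{k,n}$ on which every maximal minor becomes an affine-linear function; the defining inequalities of $\Mat_{k,n}^{\ge0}(e)$ then cut out a convex set. Nonemptiness is immediate from the very definition $\widehat{\GP{e}} = \widehat{Z}(\Mat_{k,n}^{\ge0}(e))$, so all of the work will be in proving convexity.

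First, I will parametrize the fiber. The kernel of the linear map $V \mapsto VZ^\top$ consists of matrices whose rows lie in $\ker(Z^\top)$. Assuming $Z$ has full row rank $k+m$ (the natural setting; otherwise one reduces to this case), the hypothesis $n = k+m+1$ gives $\ker(Z^\top) = \R \nu$ for a single nonzero $\nu \in \R^n$. Fixing any base point $V_0 \in \widehat{Z}^{-1}(\{W\})$, one has
\[
\widehat{Z}^{-1}(\{W\}) = \{V_0 + c\,\nu^\top : c \in \R^k\},
\]
a $k$-dimensional affine subspace of $\Mat_{k,n}$, affinely identified with $\R^k$.

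The central step, and the one I expect to be the main point of the proof, is to show that for every $S \in \binom{[n]}{k}$ the maximal minor $\Delta_S(V_0 + c\nu^\top)$ is affine-linear in $c$. Expanding the determinant by multilinearity in the columns, any summand in which two or more columns are replaced by multiples of $c$ vanishes, because $c\nu^\top$ has rank at most one and a determinant with two proportional columns is zero. Only the constant term $\Delta_S(V_0)$ and the $k$ terms where exactly one column is replaced by $c$ survive, and each of those is linear in $c$. This rank-one cancellation is precisely where the hypothesis $n = k+m+1$ enters in an essential way.

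With each $\Delta_S$ affine-linear on the fiber, the conditions defining $\Mat_{k,n}^{\ge0}(e)$ --- namely $\Delta_S \ge 0$ for all $S \in \binom{[n]}{k}$ and $\Delta_S = 0$ for $S \in I_e$ --- pull back to a finite intersection of closed halfspaces and hyperplanes in $\R^k$, which is convex. The remaining rank-$k$ constraint is automatic: since $(V_0 + c\nu^\top)Z^\top = W$ and $W$ has rank $k$ (as $\widetilde{Z}$ lands in $\gr_k(\R^{k+m})$), any element of the fiber has rank at least $k$, hence exactly $k$. Combining these observations yields the convexity of $\widehat{Z}^{-1}(\{W\}) \cap \Mat_{k,n}^{\ge0}(e)$.
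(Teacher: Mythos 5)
Your proof is correct and follows essentially the same route as the paper's: both exploit that $n=k+m+1$ makes the kernel of $v\mapsto vZ^\top$ one-dimensional, so that the fiber lies in an affine translate by rank-one perturbations on which every maximal minor is an affine function, and convexity follows. The paper phrases this segment-by-segment (each minor restricted to the segment between two fiber points is a degree-$\le 1$ polynomial in $\lambda$, nonnegative at the endpoints), whereas you parametrize the whole fiber by $c\in\R^k$ and exhibit it as a polyhedron via column multilinearity --- a cosmetic difference --- and your explicit check that the rank-$k$ condition is automatic is a point the paper leaves implicit.
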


\begin{proof}
The matrix $Z$ induces a linear map 
\begin{eqnarray}
\label{eqn:map corank 1}
\R^n &\longrightarrow& \R^{k+m}\\
v &\longmapsto& vZ^\top \nonumber,
\end{eqnarray}
where $v \in \R^n$ is a row vector. Since $n=k+m+1$, the kernel of the map \eqref{eqn:map corank 1} is $1$-dimensional. Fix a generator $a \in \R^n$ of that kernel.

Choose an arbitrary point $W \in \widehat{\GP{e}}$, and let $U$ and $V$ be any two points in $\widehat{Z}^{-1}(\{W\})$. 
Our goal is to show that for every $\lambda \in [0,1]$ the convex combination $(1-\lambda)U+\lambda V$ also belongs to $\widehat{Z}^{-1}(\{W\})$. 

Since $UZ^\top=VZ^\top=W$, the rows of the matrix $V-U$ belong to $\ker(Z)$.
Consequently, there exists a row vector $x \in \R^k$ such that $V-U = x^\top a$, where $a$ is also considered as a row vector.
Thus we have to show that for every $\lambda \in [0,1]$ the convex combination 
\begin{equation}
\label{eqn:convex combination}
(1-\lambda)U + \lambda V = U + \lambda x^\top a
\end{equation}
belongs to the space $\Mat_{k,n}^{\ge0}(e)$, this means that every $k \times k$ minor of the matrix \eqref{eqn:convex combination} is nonnegative, and in addition that all the minors of the matrix \eqref{eqn:convex combination} indexed by the nonbases $I_e \subseteq \binom{[n]}{k}$ of the matroid corresponding to $e$ are equal to zero.

A $k \times k$ submatrix of the matrix \eqref{eqn:convex combination} is of the form 
\begin{equation}
\label{eqn:matrix minor}
\left(
\begin{matrix}
u_{1i_1}+\lambda x_1a_{i_1} & \dots & u_{1i_k}+\lambda x_1a_{i_k} \\
\vdots & & \vdots \\
u_{ki_1}+\lambda x_ka_{i_1} & \dots & u_{ki_k}+\lambda x_ka_{i_k}
\end{matrix}
\right),
\end{equation}
where 
\[
U=\left(
\begin{matrix}
u_{11} & \dots & u_{1n} \\
\vdots & &\vdots \\
u_{k1} & \dots & u_{kn}
\end{matrix}
\right), \
x=(x_1 \dots x_k), \  a=(a_1 \dots a_n),
\]
and $1 \leq i_1 < \dots <i_k \leq n$. The matrix \eqref{eqn:matrix minor} can be transformed using row operations into a matrix that contains the variable $\lambda$ only in one row. Therefore, every $k\times k$ minor of the matrix \eqref{eqn:convex combination} is a polynomial of degree at most $1$ in the variable $\lambda$. Since it takes nonnegative values for $\lambda=0$ and $\lambda=1$, it is also nonnegative for all $\lambda\in [0,1]$. 
Thus for every $\lambda\in [0,1]$, the point $(1-\lambda)U+\lambda V$ belongs to $\Mat_{k,n}^{\ge0}$. 
Similarly, if $\{i_1, \dots, i_k\}$ is a nonbasis of the matroid corresponding to $e$, then the determinant of the matrix \eqref{eqn:matrix minor} is zero for $\lambda=0$ and $\lambda=1$, so it is a constant zero-polynomial, meaning that the matrix \eqref{eqn:convex combination} belongs to $\Mat_{k,n}^{\ge0}(e)$ for every $\lambda \in [0,1]$. Consequently the set $\widehat{Z}^{-1}(\{W\})$ is convex. 
\end{proof}

\begin{lemma}
\label{lemma:homeomorphism}
Let $k\geq 1, m\ge0$ and $n\geq k+m$ be integers. 
For every positroid cell $e$ and for every $W \in \widehat{\GP{e}}$, the inverse images 
\[
\widehat{Z}^{-1}(\{W\}) \subseteq \Mat_{k,n}^{\ge0}(e) \subseteq \Mat_{k,n}^{\ge0}
\qquad\text{and}\qquad 
\widetilde{Z}^{-1}(\{\span(W)\}) \subseteq \bar{e} \subseteq \gr_k^{\ge0}(\R^n)
\] 
are homeomorphic.
\end{lemma}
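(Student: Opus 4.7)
The plan is to construct an explicit homeomorphism via the natural map
\[
\phi \colon \widehat Z^{-1}(\{W\}) \longrightarrow \widetilde Z^{-1}(\{\span(W)\}), \qquad V \longmapsto \span(V),
\]
namely the restriction to $\widehat Z^{-1}(\{W\})$ of the canonical quotient $p\colon \Mat_{k,n}^{\ge 0}(e) \to \bar{e}$. Well-definedness and continuity of $\phi$ are immediate from the commutative square of the excerpt: for any $V \in \widehat Z^{-1}(\{W\})$ one has $\widetilde Z(\span(V)) = \span(\widehat Z(V)) = \span(W)$.

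Injectivity of $\phi$ follows from the freeness of the $\GL_k^+$-action on $\Mat_{k,n}^{\ge 0}$. If $\phi(V_1) = \phi(V_2)$, then $V_2 = g V_1$ for some $g \in \GL_k^+$, and the chain $W = V_2 Z^\top = g V_1 Z^\top = gW$, together with $W$ having rank $k$, forces $g = I_k$ and hence $V_1 = V_2$.

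Surjectivity is the heart of the proof. Given $[V_1]$ in the target fiber, any representative $V_1 \in \Mat_{k,n}^{\ge 0}(e)$ of $[V_1]$ satisfies $\span(V_1 Z^\top) = \span(W)$, so there is a unique $h \in \GL_k$ with $V_1 Z^\top = hW$. The candidate preimage is $V := h^{-1} V_1$: tautologically $V Z^\top = W$, and, provided $\det h > 0$, the matrix $V$ lies in $\Mat_{k,n}^{\ge 0}(e)$ (its maximal minors equal $(\det h)^{-1}$ times those of $V_1$) and represents the class $[V_1]$ in $\bar e$ (since $h^{-1} \in \GL_k^+$). Verifying $\det h > 0$ is, I anticipate, the main obstacle. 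My plan is to establish it by showing that $q \colon \widehat{\GP{e}} \to \GP{e}$ is a principal $\GL_k^+$-bundle, equivalently that $JW \notin \widehat{\GP{e}}$ for every $J \in \GL_k \setminus \GL_k^+$. Writing a hypothetical $JW = V_2 Z^\top$ with $V_2 \in \Mat_{k,n}^{\ge 0}(e)$, and $W = V_0 Z^\top$ with $V_0 \in \Mat_{k,n}^{\ge 0}(e)$, the Cauchy-Binet formula yields
\[
\sum_{S \in \binom{[n]}{k}} \bigl(\det(V_2)_S - (\det J)\det(V_0)_S\bigr) \det(Z^\top)_{S,\alpha} = 0 \quad\text{for all } \alpha \in \binom{[k+m]}{k},
\]
with the coefficient vector $\bigl(\det(V_2)_S - (\det J)\det(V_0)_S\bigr)_S$ being nonnegative and nonzero (using $\det J < 0$ and the rank-$k$ condition on $V_0, V_2$); the intended contradiction comes from the well-definedness hypothesis on $\widetilde Z$, which should preclude nontrivial nonnegative vectors in this kernel.

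Finally, continuity of $\phi^{-1}$: fix $I \subseteq [k+m]$ with $|I| = k$ such that the $k \times k$ submatrix $W_I$ is invertible (which exists because $W$ has rank $k$), and define $h(V_1) := (V_1 Z^\top)_I W_I^{-1}$, a continuous function of $V_1$. Then $\phi^{-1}([V_1]) = h(V_1)^{-1} V_1$ is readily checked to be $\GL_k^+$-invariant in the choice of representative $V_1$, so it descends to a continuous map on $\widetilde Z^{-1}(\{\span(W)\})$. Combined with $\phi$ being a continuous bijection between Hausdorff spaces, this yields the desired homeomorphism.
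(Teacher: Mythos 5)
Your construction is the same as the paper's: the map is $V\mapsto\span(V)$, the inverse sends a class represented by $V_1$ to $h^{-1}V_1$ where $h\in\GL_k$ is the unique matrix with $V_1Z^\top=hW$, and the whole lemma hinges on $\det h>0$. Your injectivity argument and your continuity argument for the inverse are correct; the latter (the explicit formula $h(V_1)=(V_1Z^\top)_I\,W_I^{-1}$, checked to be equivariant under the $\GL_k^+$-action so that $h(V_1)^{-1}V_1$ descends to the quotient) is in fact more elementary than the paper's, which instead obtains a continuous choice of representative from the triviality of the principal $\GL_k^+$-bundle $\Mat_{k,n}^{\ge0}(e)\to\bar e$ over the contractible base $\bar e$.

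The gap sits exactly where you anticipated it, in the proof that $\det h>0$. Your Cauchy--Binet identity correctly shows that if $JW=V_2Z^\top$ with $\det J<0$, then the nonnegative nonzero vector $c=\bigl(\det(V_2)_S-(\det J)\det(V_0)_S\bigr)_S$ lies in the kernel of the linear map $\wedge^kZ\colon\R^{\binom{n}{k}}\to\R^{\binom{k+m}{k}}$, $c\mapsto\bigl(\sum_Sc_S\det(Z^\top)_{S,\alpha}\bigr)_\alpha$. But well-definedness of $\widetilde Z$ does \emph{not} preclude nonzero nonnegative vectors in this kernel: it only asserts that $\wedge^kZ$ is nonvanishing on vectors of the special form $(\det V_S)_S$ with $V\in\Mat_{k,n}^{\ge0}$, i.e., on nonnegative vectors satisfying the quadratic Pl\"ucker relations. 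For $k\ge2$ your vector $c$ is a sum of two Pl\"ucker vectors and in general violates those relations, so the hypothesis says nothing about it; your implication is valid only for $k=1$, where every nonnegative nonzero vector is a Pl\"ucker vector. The standard way to close this is to use that the row span of $Z$ contains a totally positive $k$-dimensional subspace (Lam's condition, which by Karp's sign-variation results is equivalent to well-definedness): writing such a subspace as the row span of a totally positive $Y=AZ$ with $A\in\Mat_{k,k+m}$, Cauchy--Binet gives $\det(VZ^\top A^\top)=\sum_S\det V_S\det Y_S>0$ for every $V\in\Mat_{k,n}^{\ge0}$, and applying this to both sides of $V_1Z^\top=h\,V_0Z^\top$ yields $\det h>0$. (The paper itself compresses this step into the phrase ``it can be seen using Cauchy--Binet that $\det(C)>0$,'' but the route you propose to it does not go through as stated.)
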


\begin{proof}
Let $\varphi \colon \widehat{Z}^{-1}(\{W\}) \longrightarrow \widetilde{Z}^{-1}(\{\span(W)\})$ be defined by $\varphi(U)=\span(U)$, where $U\in \widehat{Z}^{-1}(\{W\})$, and $\span$ denotes the row span. 
We prove that $\varphi$ is a homeomorphism.

Clearly, $\varphi$ is continuous, so it suffices to find a continuous map $\psi:\widetilde{Z}^{-1}(\{\span(W)\}) \longrightarrow \widehat{Z}^{-1}(\{W\})$ such that $\varphi \circ \psi$ is the identity map on $\widetilde{Z}^{-1}(\{\span(W)\})$ and $\psi \circ \varphi$ is the identity map on $\widehat{Z}^{-1}(\{W\})$. Let $L \in \widetilde{Z}^{-1}(\{\span(W)\})$. Then there exists a matrix $K \in \Mat_{k,n}^{\geq0}(e)$ whose rows span the subspace $L$. Since
\[
\span(KZ^\top) = \span(W),
\]
there exists a unique $C \in \GL_k$ such that $KZ^\top=CW$. Now define $\psi$ as $\psi(L)=C^{-1}K$. It can be seen using Cauchy--Binet formula that $\det(C)>0$, thus, $C^{-1}K \in \Mat_{k,n}^{\ge0}(e)$.
Even though we have defined the map $\psi$ using an arbitrarily chosen matrix $K$ such that $\span(K)=L$, it can be checked directly that the definition of $\psi$ does not depend on a choice of $K$.

In order to prove that the map $\psi$ is continuous, we need to show that the choice of a matrix $K$ can be made continuously on $\widetilde{Z}^{-1}(\{\span(W)\})$. 
The choice of a matrix $K$ is equivalent to the choice of a positively oriented basis for the subspace $L \subseteq \R^n$. 
Therefore, we need a continuous section of the fiber bundle \eqref{eqn:fibration positroid cell} restricted to the set $\widetilde{Z}^{-1}(\{\span(W)\})$. 
Since the base space $\bar{e}$ is contractible, the fiber bundle \eqref{eqn:fibration positroid cell} is trivial. 
In particular, its restriction on $\widetilde{Z}^{-1}(\{\span(W)\})$ is also trivial, so it admits a continuous section. 
Therefore, the bases for elements of $\widetilde{Z}^{-1}(\{\span(W)\})$  can be chosen continuously. 
On the other hand, the matrix $C$ is a solution of the linear system $KZ^\top=CW$, which depends continuously on $K$, thus it also depends continuously on $L$. 

Lastly, 
\[
\varphi(\psi(L))=\varphi(C^{-1}K)=\span(C^{-1}K)=\span(K)=L,
\]
holds for every $L \in \widetilde{Z}^{-1}(\{\span(W)\})$, and
\[
\psi(\varphi(U))=\psi(\span(U))=C^{-1}U,
\]
for every $U \in \widehat{Z}^{-1}(\{W\})$, where $C$ is the unique $k \times k$ matrix such that $W=\widehat{Z}(U)=UZ^\top=CW$, hence $C$ is the identity matrix.
\end{proof}

Finally, Lemma \ref{lemma:fiber} and Lemma \ref{lemma:homeomorphism} complete the proof of Theorem \ref{thm:fibers}.

%%%%%%%%%%%%%%%%%%%%%%%%%%%%%%%%%%%%%%%%%%%%%%%%%%%%%%%%%%%%%%%%%%%%%%%%%%%%%%%%
\section{Proof of Theorem \ref{thm:triangulation}}
\label{sec:triangulation}
%%%%%%%%%%%%%%%%%%%%%%%%%%%%%%%%%%%%%%%%%%%%%%%%%%%%%%%%%%%%%%%%%%%%%%%%%%%%%%%%

Let us fix an arbitrary positroid cell $e$ in the CW decomposition of the totally nonnegative Grassmannian $\nng{k}{n}$. 

Furthermore, let $d=\binom{k+m}{k}$, and consider the Veronese embedding $\nu \colon \RP^{d-1} \longrightarrow \R^{d\times d}$ given by
\[
x=(x_1: \ldots :x_d) \longmapsto \left( \frac{x_i x_j}{x_1^2+\dots +x_d^2} \right)_{1\leq i,j\leq d},
\]
where $x=(x_1: \ldots :x_d)\in \RP^{d-1}$.
The embedding $\nu$ maps every line $x \in \RP^{d-1}$ to the matrix of the projection $\R^d \longrightarrow x$. 
For more details on the Veronese embedding see for example \cite[Sec.\,3.4.2]{BochnakEtAl1998}.

Consider next, with obvious abuse of notation,  the continuous map $\nu\colon \R^d{\setminus}\{0\} \longrightarrow \R^{d\times d}$ given by
\[
(x_1, \dots, x_d) \longmapsto \left( \frac{x_i x_j}{x_1^2+\dots +x_d^2} \right)_{1\leq i,j\leq d} \in \R^{d \times d}.
\]

In this way, we obtain the commutative diagram of spaces and maps
\[
\begin{tikzcd}  
 \Mat_{k,n}^{\ge0}   \arrow{r}{\widehat{Z}} \arrow{d} & \Mat_{k,k+m} \arrow{r}{\gamma} \arrow{d} & \R^d \setminus \{0\} \arrow{d}{\pi}  \arrow{r}{\nu} & \R^{d\times d} \arrow{d}{\id}     \\
\nng{k}{n} \arrow{r}{\widetilde{Z}} & \gr_k(\R^{k+m}) \arrow{r}{\gamma} & \RP^{d-1} \arrow{r}{\nu} & \R^{d\times d} ,
\end{tikzcd}
\]
where $\gamma\colon \gr_k(\R^{k+m}) \longrightarrow \RP^{d-1}$ is the Pl\"ucker embedding, $\gamma\colon  \Mat_{k,k+m} \longrightarrow \R^{d}\setminus \{0\}$ maps every matrix to the tuple of its $k \times k$ minors, and $\pi\colon \R^{d}\setminus \{0\} \longrightarrow \RP^{d-1}$ is the quotient map.

The Grassmann polytope $\GP{e}=\widetilde{Z}(\bar{e})$ is embedded into $\RP^{d-1}$ via $\gamma$, the projective space $\RP^{d-1}$ is embedded into the Euclidean space $\R^{d \times d}$ via $\nu$, and thus the image $\nu(\gamma(\GP{e}))$ is homeomorphic to $\GP{e}$.

First, we prove that the homeomorphic image of the Grassmann polytope $\nu(\gamma(\GP{e}))$ is semi-algebraic. 
The commutativity of the diagram above implies that
\[
\nu(\gamma(\GP{e}))  = \nu(\pi(\gamma(\widehat{\GP{e}}))) = \nu(\gamma(\widehat{\GP{e}}))= \nu(\gamma(\widehat{Z}(\Mat_{k,n}^{\geq 0}(e)))).
\]
The set $\Mat_{k,n}^{\ge0}(e) \subseteq \R^{k \times n}$ is semi-algebraic, even algebraic. 
Since the map $\widehat{Z}$ is multiplication by a matrix, the set $\widehat{\GP{e}}$ is also semi-algebraic \cite[Cor.\,2.4(2)]{Coste2002}.
Furthermore, the map $\gamma\colon \Mat_{k,k+m} \longrightarrow \R^{d}\setminus \{0\}$ is a restriction of a polynomial map $\R^{k\times (k+m)}\longrightarrow\R^d$, and thus $\gamma(\widehat{\GP{e}}) \subseteq \R^{d} \setminus \{0\}$ is semi-algebraic by \cite[Cor.\,2.4(2)]{Coste2002} as well. 
Finally, the map $\nu\colon\R^d \longrightarrow \R^{d \times d}$ is a regular rational map, and consequently it maps semi-algebraic sets to the semi-algebraic sets, see \cite[Prop.\,2.2.7]{BochnakEtAl1998} \cite[Cor.\,2.9(1)]{Coste2002}.
Hence, we have proved that $\nu(\gamma(\GP{e}))$ is semi-algebraic in $\R^{d\times d}$, and consequently the Grassmann polytope $\GP{e}$ is homeomorphic to a semi-algebraic set.
In particular, since $\GP{e}$ is compact and homeomorphic to a semi-algebraic set it admits a triangulation according to \cite[Thm.\,9.2.1]{BochnakEtAl1998} \cite[Thm.\,3.11]{Coste2002}.

\medskip
Second, notice that we obtained a bit more.
The $\pi$ inverse image of the embedded Grassmann polytope $\GP{e}$ via $\gamma$ can be presented as follows 
\[
\pi^{-1}(\gamma(\GP{e}))=\gamma(\widehat{Z}(\Mat_{k,n}^{\ge0}(e)))\cup \big(-\gamma(\widehat{Z}(\Mat_{k,n}^{\geq 0}(e)))\big).
\]
Since we proved that $\gamma(\widehat{Z}(\Mat_{k,n}^{\geq 0}(e)))$ is semi-algebraic we can conclude that $\pi^{-1}(\gamma(\GP{e}))$ is also a semi-algebraic subset of $\R^d$.

Having in mind that every real projective variety is affine,  we can define that a subset $X$ of the real projective space $\RP^{d-1}$ is semi-algebraic if, for example, its  preimage $\pi^{-1}(X)\subseteq\R^d$, via the defining quotient map $\R^d\longrightarrow \RP^{d-1}$, is semi-algebraic.

Thus, we proved that the Grassmann polytope $\GP{e}$, when embedded in $\RP^{d-1}$ via the Pl\"ucker embedding, is a semi-algebraic subset of the real projective space.

\qed

%%%%%%%%%%%%%%%%%%%%%%%%%%%%%%%%%%%%%%%%%%%%%%%%%%%%%%%%%%%%%%%%%%%%%%%%%%%%%%%%
\section{Proof of Theorem \ref{thm:homeomorphic}}
\label{sec:homeomorphic}
%%%%%%%%%%%%%%%%%%%%%%%%%%%%%%%%%%%%%%%%%%%%%%%%%%%%%%%%%%%%%%%%%%%%%%%%%%%%%%%%

Let $ k \geq 1$, $m \ge0$ and $n \geq k+m$ be integers, and suppose in addition that $m$ is even. Let $S \in \GL_n$ be given by 
\[
S(x_1, \dots, x_n) = (x_2, \dots, x_n, (-1)^{k-1}x_1).
\]
Denote by $Z_0 \in \Mat_{k+m,n}$ the matrix whose rows are the eigenvectors of the matrix $S+S^\top$ that correspond to the largest $k+m$ eigenvalues. 
It was shown in \cite[Lemma~3.1]{Galashin2017} that all $(k+m)\times (k+m)$ minors of the matrix $Z_0$ are positive, thus it defines an amplituhedron $\A_{n,k,m}(Z_0)$, called \emph{cyclically symmetric amplituhedron}.
Galashin, Karp \& Lam \cite[Thm.\,1.2]{Galashin2017} showed that $\A_{n,k,m}(Z_0)$ is homeomorphic to a closed $km$-dimensional ball whenever the parameter $m$ is even.

We conclude the proof of Theorem \ref{thm:homeomorphic} by showing that the amplituhedra $\A_{n,k,m}(Z)$ and $\A_{n,k,m}(Z_0)$ are homeomorphic.

From \cite[Cor.\,1.12(ii)]{Karp2017b} we know that entries of every nonzero vector of $\ker(Z_0)$ and of $\ker(Z)$ are nonzero, and they alternate in sign. 
Since $n=k+m+1$, the kernels of matrices $Z$ and $Z_0$ are $1$-dimensional. 
Let $a=(a_1, \dots, a_n)\in \R^n$ be a generator of the kernel of $Z$ and let $b=(b_1, \dots, b_n)\in \R^n$ be  a generator of the kernel of $Z_0$ (it follows from the cyclic symmetry of $Z_0$ that $b_i=(-1)^{i-1}$ for $1\leq i\leq n$, see~\cite{Galashin2017}). 
Choose them in such a way that $a_1$ and $b_1$ have the same sign. 
Consequently, for every $1 \leq i \leq n$, the entries $a_i$ and $b_i$ have the same sign. 
Let $D$ be an $n \times n$ diagonal matrix $D=\diag(\frac{a_1}{b_1}, \dots, \frac{a_n}{b_n})$. 
The matrix $ZD$ has the same kernel as the matrix $Z_0$, and since the diagonal entries of the matrix $D$ are positive, all maximal minors of the matrix $ZD$ are positive. 
The fact that the matrices $ZD$ and $Z_0$ have the same kernel implies that they have the same row spans, as well. 
In particular, there exists a matrix $C \in \GL_{k+m}^+$ such that $Z_0  = CZD$.

Multiplication by $D$ on the right gives a homeomorphism $\widehat{D}: \Mat_{k,n}^{\ge0} \longrightarrow\Mat_{k,n}^{\ge0}$, which induces a homeomorphism $\widetilde{D}:\nng{k}{n} \longrightarrow \nng{k}{n}$. Furthermore, multiplication by $C^\top$ on the right gives a homeomorphism $\widehat{C}:\Mat_{k,k+m} \longrightarrow \Mat_{k,k+m}$, thus the induced map $\widetilde{C}: \gr_k(\R^{k+m}) \longrightarrow \gr_k(\R^{k+m})$ is also a homeomorphism. Hence, we obtain the commutative diagram of spaces and maps
\[
\begin{tikzcd}  
 \Mat_{k,n}^{\ge0}   \arrow{r}{\widehat{D}} \arrow{d} & \Mat_{k,n}^{\ge0}   \arrow{r}{\widehat{Z}} \arrow{d}  &\Mat_{k,k+m} \arrow{r}{\widehat{C}} \arrow{d} & \Mat_{k,k+m} \arrow{d}      \\
\nng{k}{n} \arrow{r}{\widetilde{D}} & \nng{k}{n} \arrow{r}{\widetilde{Z}}  & \gr_k(\R^{k+m}) \arrow{r}{\widetilde{C}} & \gr_k(\R^{k+m}).
\end{tikzcd}
\]
The image of the composition $\widetilde{C}\circ\widetilde{Z}\circ\widetilde{D}$ of the maps in the lower row of the diagram is the cyclically symmetric amplituhedron $\A_{n,k,m}(Z_0)$ and the image of the map $\widetilde{Z}$ is the amplituhedron $\A_{n,k,m}(Z)$. Since the maps $\widetilde{C}$ and $\widetilde{D}$ are homeomorphisms, these two amplituhedra are homeomorphic. Finally, the fact that the cyclically symmetric amplituhedron $\A_{n,k,m}(Z_0)$ is homeomorphic to a $km$-dimensional ball \cite[Thm.\,1.2]{Galashin2017}, when $m$ is even, concludes the argument that every amplituhedron $\A_{n,k,m}(Z)$ is homeomorphic to a $km$-dimensional ball whenever $n=k+m+1$ and $m$ is even.\qed

\medskip
The proof of Theorem \ref{thm:homeomorphic} gives even more.
Let us say that two Grassmann polytopes $\GPZ{e}{Z},\GPZ{e'}{Z'}\subseteq \gr_k(\R^{k+m})$ are \emph{projectively equivalent} if there exists a matrix $M\in \GL_{k+m}$ such that
\[
\GPZ{e'}{Z'}=\{\widetilde{M}(x)\mid x\in \GPZ{e}{Z}\}.
\] 
Here $\widetilde{M}$ denotes a map $\gr_k(\R^{k+m})\longrightarrow \gr_k(\R^{k+m})$ induced by the natural action of $M$ on $\R^{k+m}$.
For $k=1$, this coincides with the standard notion of projective equivalence for polytopes in the projective space $\RP^{k+m-1}$. 
The proof of Theorem \ref{thm:homeomorphic} actually shows that for $n=k+m+1$, any two amplituhedra are projectively equivalent.

%%%%%%%%%%%%%%%%%%%%%%%%%%%%%%%%%%%%%%%%%%%%%%%%%%%%%%%%%%%%%%%%%%%%%%%%%%%%%%%%

\end{document}